\newcommand{\Z}{\mathbb{Z}}
\newcolumntype{R}{>{\raggedleft\arraybackslash}X}
\newcolumntype{L}{>{\raggedright\arraybackslash}X}
\newtheorem{theorem}{Theorem}
\title{ \bf The degree-diameter problem for circulant graphs of degree 8 and 9}
\author{R.R. Lewis\\[-3pt]
\small Department of Mathematics and Statistics\\[-3pt]
\small The Open University\\[-3pt]
\small Milton Keynes, UK\\[-3pt]
\small \texttt{robert.lewis@open.ac.uk}}
\date{10th April 2014}
\makeatletter\markright{R. R. Lewis}\makeatother 
\renewcommand\section{\@startsection {section}{1}{\z@}%
                                   {-2.5ex \@plus -1ex \@minus -.2ex}%
                                   {1.3ex \@plus.2ex}%
                                   {\normalfont\bf}}
\begin{document}
\maketitle

\begin{abstract}

This paper considers the degree-diameter problem for undirected circulant graphs. The focus is on extremal graphs of given (small) degree and arbitrary diameter. The published literature only covers graphs of up to degree 7. The approach used to establish the results for degree 6 and 7 has been extended successfully to degree 8 and 9. Candidate graphs are defined as functions of the diameter for both degree 8 and degree 9. They are proven to be extremal for small diameters. They establish new lower bounds for all greater diameters, and are conjectured to be extremal. The existence of the degree 8 solution is proved for all diameters. Finally some conjectures are made about solutions for circulant graphs of higher degree.
\end{abstract}


\section{Introduction}

The degree-diameter problem is to identify extremal graphs, having the largest possible number of vertices for a given maximum degree and diameter. There is an extensive body of work on the degree-diameter problem in the literature, but relatively few papers consider the specific case of undirected circulant graphs. For circulant graphs there are two main areas of research: largest graphs of given (small) degree and arbitrary diameter, and of given (small) diameter and arbitrary degree. This paper will focus on the first, where the goal for circulant graphs of any given degree, considered as Cayley graphs of cyclic groups, is to determine the order and generator sets for extremal graphs as functions of their diameter.

So far this has only been achieved for graphs of degree $d\leq4$. For degree 2 and 3, the solutions are straightforward. Chen and Jia included a proof for degree 4 in their 1993 paper \cite {Chen}, and Dougherty and Faber presented a proof for degree 5 in 2004 \cite {Dougherty}. Chen and Jia also constructed a family of graphs of even degree $d\geq4$ and all diameters $k\geq d/2$. This family establishes a useful lower bound for extremal graphs of all unresolved degrees. For both degree 6 and degree 7, Dougherty and Faber \cite {Dougherty} constructed families of graphs, defined as functions of the diameter, which improved on Chen and Jia's lower bound. These graphs have been proved by computer search to be extremal for small diameters and are conjectured to remain extremal for all greater diameters. Dougherty and Faber also proved an upper bound for Abelian Cayley graphs valid for all degrees and diameters. Since then it appears that nothing further has been published on extremal circulant graphs of given degree and arbitrary diameter. Miller and Siran's comprehensive 2013 survey of the state of the art of the degree-diameter problem \cite {Miller} includes only a relatively brief section on Abelian Cayley graphs, mostly relating to Dougherty and Faber's results for circulant graphs. Therefore the Dougherty and Faber constructions remain the best known solutions of degree 6 and 7 for arbitrary diameter. For all higher degrees no solutions have been published which improve on the Chen and Jia constructions.
In this paper we will review these upper and lower bounds and the extremal and best known solutions up to degree 7. 

The main result of this paper is the construction of families of circulant graphs of both degree 8 and 9, defined as functions of the diameter. These graphs improve on Chen and Jia's lower bounds. They have been proved by computer search to be extremal for small diameters above a threshold value.
A proof of the existence of these graphs is given for the case of degree 8 for arbitrarily large diameters. This proof closely follows the approach taken by Dougherty and Faber for their proof of the existence of the degree 6 construction \cite {Dougherty}. The full proof, covering the separate cases of even and odd diameter and handling the exceptions for all combinations of generator elements, extends to 20 pages. Therefore in this paper the details are included only for a representative subcase. The full proof is available on ArXiv \cite {Lewis}. These new families of graphs are conjectured to be extremal for all diameters above a defined threshold.


\section{Properties of circulant graphs}

A circulant graph $X(\Z_n,C)$ of order $n$ may be defined as a Cayley graph whose vertices are the elements of the cyclic group $\Z_n$ where two vertices $i,j$ are connected by an arc $(i, j)$ if and only if $j-i$ is an element of $C$, a subset of $\Z_n \setminus 0$, called the connection set. If $C$ is closed under additive inverses then $X$ is an undirected graph. This paper will only consider undirected connected circulant graphs.

In common with all Cayley graphs, circulant graphs are vertex transitive. With appropriate vertex labelling, the adjacency matrix of a circulant graph is a circulant matrix. By definition such a graph is regular, with the degree $d$ of each vertex equal to the order of $C$. If $n$ is odd then $\Z _n \setminus 0 $ has no elements of order $2$. Therefore $C$ has even order, say $d=2f$, and comprises $f$ complementary pairs of elements with one of each pair strictly between $0$ and $n/2$. The set of $f$ elements of $C$ between 0 and $n/2$ is defined to be the generator set $G$ for $X$. If $n$ is even then $\Z_n \setminus 0$ has just one element of order $2$, namely $n/2$. In this case $C$ is comprised of $f$ complementary pairs of elements, as for odd $n$, with or without the addition of the self-inverse element $n/2$. If $C$ has odd order, so that $d=2f+1$, then the value of the self-inverse element $n/2$ is fixed by $n$. Therefore for a circulant graph of given order and degree, its connection set $C$ is completely defined by specifying its generator set $G$. The order of the connection set is equal to the degree $d$ of the graph, and the order of the generator set $f$ is defined to be the dimension of the graph.
In summary, undirected circulant graphs of odd degree $d$ must have even order. They have dimension $f=(d-1)/2$. Graphs of even degree $d$ may have odd or even order. They have dimension $f=d/2$.

In the literature the symbol $d$ is variously used to define the degree or the diameter or the dimension of the graph. Adopting the terminology of Macbeth, Siagiova and Siran \cite {Macbeth}, we will use $d$ for degree and $k$ for diameter, and also $CC(d,k)$ for the order of an extremal circulant graph. In addition we use $f$ for the dimension of the graph.


\section{Upper and lower bounds}

The most widely known upper bounds for graph order for the degree-diameter problem are the Moore bounds for arbitrary graphs of given degree and diameter. However circulant graphs of degree 3 or more have girth of only 3 or 4 and so are significantly smaller than their Moore bound. Therefore this is not a useful upper bound in practice for extremal circulant graphs. Much more useful for circulant graphs is the upper bound for the order of the Cayley graph of any Abelian group given by Dougherty and Faber \cite{Dougherty}. 
For any dimension $f$ we consider $\Z^f$ with the canonical generators $\textbf{e}_i, 1\leq i\leq f$. For any Abelian group $G$ generated by $g_1, ..., g_f$ there is a unique homomorphism from $\Z^f$ onto $G$ which sends $\textbf{e}_i$ to $ g_i$ for all $i$. If $N$ is its kernel, then $G$ is isomorphic to $\Z^f/N$, and the Cayley graph of $G$ with the given generators is isomorphic to the Cayley graph of $\Z^f/N$ with the canonical generators for $\Z^f$. For any given diameter $k$, $S_{f,k}$ is defined to be the set of elements of $\Z^f$ which can be expressed as a word of length at most $k$ in the generators $\textbf{e}_i$ of $\Z^f$, taken positive or negative. Equivalently, $S_{f,k}$ is the set of points in $\Z^f$ distant at most $k$ from the origin under the $L^1$ (Manhattan) metric: $S_{f,k}=\{(x_1,...,x_f) \in \Z^f : \vert x_1 \vert +...+\vert x_f \vert \leq k\}$. Within the literature on coding theory and tiling problems it is usually called the $f$-dimensional Lee sphere of radius $k$. This leads to the following theorem.

\begin{theorem}
(Dougherty and Faber). Let $G,N$ and $g_1,...,g_f$ be as above. Then the undirected Cayley graph for an Abelian group $G$ and $g_1,...,g_f$ has diameter at most $k$ if and only if $S_{f,k}+N=\Z^f$.
\label {theoremD}
\end{theorem}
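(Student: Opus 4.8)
The plan is to reduce the diameter condition to a statement about how far the generators reach in $G$, and then to translate that reach back into $\Z^f$ via the kernel $N$. Since every Cayley graph is vertex-transitive (as noted above), the diameter of the Cayley graph equals the eccentricity of the identity $0 \in G$, so it suffices to control $d(0,h)$ for every $h \in G$, where $d$ denotes graph distance.

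First I would identify $d(0,h)$ with a word length. A walk of length $\ell$ from $0$ to $h$ is a sequence of steps, each adding some $\pm g_i$, so $h$ is the value of a word of length $\ell$ in the generators, and conversely. Hence $d(0,h) \leq k$ holds if and only if $h$ can be written as $\sum_i x_i g_i$ for integers $x_i$ with $\sum_i \vert x_i \vert \leq k$. Using the homomorphism $\phi \colon \Z^f \to G$ with $\phi(\textbf{e}_i) = g_i$, this says precisely that $h \in \phi(S_{f,k})$, since $S_{f,k}$ is exactly the set of $(x_1,\ldots,x_f)$ with $\sum_i \vert x_i \vert \leq k$. The one point needing care is that a word of length $\ell$ may involve cancellation, so its value $x$ satisfies $\sum_i \vert x_i \vert \leq \ell$ rather than equality; but this inequality is all that is required, and conversely any $x \in S_{f,k}$ is realized by a word of length $\sum_i \vert x_i \vert \leq k$, so the correspondence between elements reachable in at most $k$ steps and $\phi(S_{f,k})$ is exact.

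Combining these, the Cayley graph has diameter at most $k$ if and only if $\phi(S_{f,k}) = G$, that is, if and only if $S_{f,k}$ surjects onto $G \cong \Z^f/N$ under the quotient map. Finally I would rewrite this surjectivity in additive form: $\phi(S_{f,k}) = G$ means every coset $x + N$ (for $x \in \Z^f$) meets $S_{f,k}$, which is exactly the statement that every $x \in \Z^f$ can be written as $s + n$ with $s \in S_{f,k}$ and $n \in N$, namely $S_{f,k} + N = \Z^f$. Reading this chain of equivalences in both directions yields the desired "if and only if".

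I do not expect a genuinely hard step, as the argument is a sequence of reformulations rather than a calculation. The most delicate bookkeeping lies in the first equivalence, where one must confirm that graph distance in the undirected Cayley graph coincides with the $L^1$ word length, and that passing to the quotient $\Z^f/N$ neither creates nor destroys short representatives. Both facts follow directly from the definition of the Cayley graph and from $\phi$ mapping the generating moves $\pm\textbf{e}_i$ of $\Z^f$ onto the generating moves $\pm g_i$ of $G$.
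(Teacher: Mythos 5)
Your proof is correct. Note, however, that the paper itself offers no proof of this theorem: it is stated as a cited result of Dougherty and Faber, with the preceding paragraph only setting up the homomorphism $\phi\colon\Z^f\to G$, its kernel $N$, and the definition of $S_{f,k}$ as the set of elements expressible as words of length at most $k$ in the $\textbf{e}_i$. Your chain of reformulations --- diameter equals the eccentricity of $0$ by vertex-transitivity, $d(0,h)\leq k$ iff $h\in\phi(S_{f,k})$ (with the cancellation inequality handled correctly), and $\phi(S_{f,k})=G$ iff every coset of $N$ meets $S_{f,k}$, i.e.\ $S_{f,k}+N=\Z^f$ --- is precisely the standard argument that the paper's setup leaves implicit, so your proposal supplies a correct, self-contained proof consistent with the paper's framework rather than an alternative route.
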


For any given dimension $f$, let $S(f,k)=\vert S_{f,k} \vert $. Then $S(f,k)$ gives an upper bound for the order of an Abelian Cayley graph of even degree $d=2f$ and diameter $k$, and therefore in particular of a circulant graph. In order for a graph to achieve this upper bound it is necessary that different combinations of multiples of the generators create paths of length up to $k$ from an arbitrary root vertex that lead to different vertices. This is equivalent to achieving an exact tiling of $\Z^f$ with Lee spheres $S_{f,k}$. Golomb and Welch \cite{Golomb} proved such a tiling is possible for 1 and 2 dimensions for any radius and for any dimension for radius 1. They conjectured that this is not possible for any graph of dimension $f\geq 3$ and radius $k\geq 2$. This conjecture is still open, although various authors have presented proofs of non-existence for 3, 4 and 5 dimensions. The paper by Horak \cite {Horak} covers all three of these dimensions.

From the definition it is easily shown that $S(f,k)$  satisfies the `square' recurrence relation $S(f,k)=S(f,k-1)+S(f-1,k)+S(f-1,k-1)$ which, along with boundary values $S(1,k)=1+2k$ for $k \geq 1$ and $S(f,1) = 1+2f$, for $f \geq 1$ enables the value to be calculated for any $f$ and $k$. Stanton and Cowan \cite{Stanton} derived an explicit formula for this relation:
\[S(f,k)= \sum _{i=0}^f 2^i{f\choose i} {k\choose i}.\]
We also have an asymptotic form: $S(f,k)=(2^f/f!)k^f+O(k^{f-1})$.

For a circulant graph of odd degree $d=2f+1$, having a single self-inverse generator, we note, following Dougherty and Faber \cite{Dougherty}, that the set of elements which can be written as a word of length at most $k$ in the generators of the group $\Z^f \times \Z_2$ is $(S_{f,k} \times 0) \cup (S_{k-1} \times 1)$, giving an upper bound for the order of the graph of $S(f,k)+S(f,k-1)$. For an Abelian Cayley graph of given degree a higher number of self-inverse generators would reduce the number of non self-inverse generators, thereby reducing the order of the polynomial in $k$. Thus this circulant graph upper bound is also the upper bound for any Abelian Cayley graph of the same degree and diameter.

We therefore define the upper bound $M_{AC}(d,k)$ for the order of an Abelian Cayley graph of degree $d$ and diameter $k$ as follows: 
\[M_{AC}(d,k) =
\begin{cases}
\ S(f,k) &\mbox{ for even } d, \mbox{ where } f=d/2\\
\ S(f,k)+S(f,k-1) &\mbox { for odd } d, \mbox{ where } f=(d-1)/2.
\end{cases}
\]
Thus we have the asymptotic form:
\[M_{AC}(d,k) =
\begin{cases}
\ (2^f / f!) k^f+O(k^{f-1}) &\mbox{ for even } d, \mbox{ where } f=d/2\\
\ (2^{f+1} / f!) k^f+O(k^{f-1}) &\mbox{ for odd } d, \mbox{ where } f=(d-1)/2.\\
\end{cases}
\]
Table \ref{table:3C} gives formulae for $M_{AC}(d,k)$ in terms of $k$ for $d \leq 9$.

\begin{table}[!htbp]
\caption{Formulae for upper bounds $M_{AC}(d,k)$ in terms of diameter $k$ for degree $d \leq 9$} 
\centering 
\begin{tabular} {l l}
\noalign {\vskip 2mm} 
\hline\hline 
\noalign {\vskip 1mm} 
Degree,$d$ & Upper bound, $M_{AC}(d,k)$ \\ 
\hline 
\noalign {\vskip 1mm} 
2 & $2k+1$ \\ 
3 & $4k$ \\
4 & $2k^2+2k+1$ \\
5 & $4k^2+2$ \\
6 & $(4k^3+6k^2+8k+3)/3$ \\
7 & $(8k^3+16k)/3$ \\ 
8 & $(2k^4+4k^3+10k^2+8k+3)/3$ \\
9 & $(4k^4+20k^2+6)/3$ \\
\hline 
\end{tabular}
\label{table:3C} 
\end{table}

As the Golomb-Welch conjecture has been confirmed for dimensions 3, 4 and 5, this means that no circulant graph of degree 6, 8 or 10 can achieve the upper bound $M_{AC}(d,k)$. Furthermore, as the upper bound for odd degree $d=2f+1$ also depends on $S(f,k)$, nor can any circulant graph of degree 7, 9 or 11 achieve its upper bound. The Golomb-Welch conjecture implies this also to be true for all higher dimensions.

Lower bounds on the order of extremal circulant graphs of even degree $d \geq 6$ were established by Chen and Jia in 1993 \cite{Chen}. For any dimension $f=d/2$ and diameter $k$ such that $k \geq f \geq 3$ let $a=\lfloor (k-f+3)/f \rfloor$. Then the lower bound of \cite{Chen} is given by
\[
CJ(d,k) = 2a \sum_{i=0}^{f-1} (4a)^i =  \frac{1}{2} \left (\frac{4}{f} \right )^fk^f+O(k^{f-1}).
\] 
 A graph of this order is constructed from the generator set $ \{ 1, 4a, (4a)^2,...,(4a)^{f-1} \}$.

For degrees $d=6$ and $d=8$ we have the following expressions.
\[CJ(6,k) = 32k^3 /27 + O(k^2) 
\]
\[CJ(8,k) = k^4 /2 + O(k^3)
\]


\section {Extremal and largest known circulant graphs up to dimension 3}

The upper bounds for Abelian Cayley graphs, $M_{AC}(d,k)$, are achieved for degree 2, 3 and 4 by circulant graphs. For degree 2, taking $\Z_{2k+1}$ and generator 1 (so that the connection set $C=\{ \pm 1\}$), the resultant graph is the cycle graph on $2k+1$ vertices which has diameter $k$, so that $CC(2,k) = 2k+1$.
For degree 3, taking $\Z _{4k}$ and generator 1, connection set $C= \{ \pm1,2k \}$, the graph is a cycle graph on $4k$ vertices with $2k$ edges added to join opposite pairs of vertices. Starting from vertex 0 and taking a path defined by edges +1 we can reach vertices $1,2,...,k$ with paths of length $\leq k$. By first taking edge $2k$ followed by edges +1 we reach vertices $2k,2k+1,...,3k-1$ with length $\leq k$. By taking these paths with edges $-1$ instead of $+1$ we can reach all the other vertices with a path of length $\leq k$. Hence the diameter is less than or equal to $k$. However we have $M_{AC}(3,k)=4k$. Therefore the specified graph is extremal and $CC(3,k)=4k$. 

For degree 4, Chen and Jia \cite{Chen} proved that $\Z_{2k^2+2k+1}$ with generator set $\{ 1,2k+1 \}$ has diameter $k$ for all $k$. As $M_{AC}(4,k)=2k^2+2k+1$, this proves the graph is extremal and $CC(4,k)=2k^2+2k+1$. For degree 5, Dougherty and Faber \cite{Dougherty} proved that the extremal solution for $k>1$ is $\Z_{4k^2}$ with generator set $\{ 1,2k-1 \}$ (connection set $\{\pm 1, \pm (2k-1), 2k^2 \}$) and order that is 2 less than $M_{AC}(5,k)$, giving $CC(5,k)=4k^2$. For degree 2, 3, 4 and 5 these extremal circulant graphs are also the largest Abelian Cayley graphs.

After degree 5 the situation becomes more difficult. Regarding graphs of three dimensions, Dougherty and Faber \cite{Dougherty} discovered families of circulant graphs of degree 6 and 7 which were proved by computer search to be extremal Abelian Cayley graphs for diameter $k \leq 18$ for degree 6, and for diameter $k \leq 10$ for degree 7. For both degree 6 and 7, the formula for the order of the solution, $DF(d,k)$, depends on the value of $k \pmod 3$. Tables \ref{table:3D} and \ref{table:3E} present these solutions alongside the corresponding expressions for the lower and upper bounds, $CJ(d,k)$ and $M_{AC}(d,k)$.

\begin{table} [!htbp]
\small
\caption{\small order of largest known solutions of degree 6, $DF(6,k)$, for arbitrary diameter $k\geq 2$, compared with lower bound $CJ(6,k)$ and upper bound $M_{AC}(6,k)$.} 
\centering 
\begin{tabularx} {\linewidth}{L L c l}
\noalign {\vskip 2mm}  
\hline\hline 
\noalign {\vskip 1mm} 
Diameter, $k$ & order, $DF(6,k)$ & & Lower bound, $CJ(6,k)$ \\
\hline  
\noalign {\vskip 1mm} 
$k \equiv 0 \pmod 3$ & $(32k^3+48k^2+54k+27)/27$ & & $(32k^3+24k^2+18k)/27$ \\ 
$k \equiv 1 \pmod 3$ & $(32k^3+48k^2+78k+31)/27$ & & $(32k^3-72k^2+66k-26)/27$ \\
$k \equiv 2 \pmod 3$ & $(32k^3+48k^2+54k+11)/27$ & & $(32k^3-168k^2+306k-196)/27$ \\  [1mm]
\hline 
\noalign {\vskip 1mm} 
Upper bound, $M_{AC}(6,k)$ & $(4k^3+6k^2+8k+3)/3$ & $=$ &$(36k^3+54k^2+72k+27)/27$ \\ 
\hline
\end{tabularx}
\label{table:3D} 
\end{table}

\begin{table} [!htbp]
\small
\caption{\small order of largest known solutions for degree 7, $DF(7,k)$, for arbitrary diameter $k\geq 3$, compared with upper bound $M_{AC}(7,k)$.} 
\centering 
\begin{tabularx} {\linewidth} {L r c l} 
\noalign {\vskip 2mm} 
\hline\hline 
\noalign {\vskip 1mm} 
Diameter, $k$ & & & order, $DF(7,k)$ \\ 
\hline 
\noalign {\vskip 1mm} 
$k \equiv 0 \pmod 3$ & & & $ (64k^3+108k)/27$ \\
$k \equiv 1 \pmod 3$ & & & $ (64k^3+60k-16)/27$ \\
$k \equiv 2 \pmod 3$ & & & $ (64k^3+60k+16)/27$ \\
\hline
\noalign {\vskip 1mm} 
Upper bound, $M_{AC}(7,k)$ & $(8k^3+16k)/3$ & $=$ &$(72k^3+144k)/27$ \\ 
\hline
\multicolumn {3} {l} {\footnotesize Note: $CJ(d,k)$ is only defined for even $d$.} 
\end{tabularx}
\label{table:3E} 
\end{table}

Dougherty and Faber \cite{Dougherty} proved the existence of the degree 6 graphs of order $DF(6,k)$ for all greater values of $k$, and they remain the largest Abelian Cayley graphs of three dimensions so far discovered. For the degree 6 graphs there is a unique solution up to isomorphism for diameter $k\equiv 1$ (mod 3), and for degree 7 there is a unique solution for $k\equiv 0$ (mod 3). For other values of $k$ there are two distinct isomorphism classes of graphs for both degree 6 and 7, where $k\ge 3$ for degree 7 as $DF(7,2)$ is not extremal. Generator sets for these solutions are shown in Table \ref{table:3F}.

\begin{table} [h]

\small
\caption{\small Generator sets for degree 6 and 7 graphs of order $DF(6,k)$ and $DF(7,k)$, for diameter $k$.} 
\centering 
\begin{tabularx} {\linewidth} { @ { } l l  l l l } 
\noalign {\vskip 1mm} 
\hline\hline 
\noalign {\vskip 1mm} 
Degree 6 &  \multicolumn {3} {l} {Generator set for isomorphism class 1} \\ 
\hline 
\noalign {\vskip 1mm} 
$k \equiv 0 \pmod 3  $ & & 1 & $(4k+3)/3$ & $(16k^2+12k+9)/9$  \\
$k \equiv 1 \pmod 3  $ & \multicolumn {2} {l} {none}   \\
$k \equiv 2 \pmod 3  $ & & 1 & $(4k+1)/3$ & $(16k^2+20k+13)/9$  \\
\hline \hline
\noalign {\vskip 1mm} 
Degree 6 & \multicolumn {3} {l} {Generator set for isomorphism class 2} \\ 
\hline 
\noalign {\vskip 1mm} 
$k \equiv 0 \pmod 3  $ & & 1 & $(8k^2+6k)/9$ & $(8k^2+18k+18)/9$  \\
$k \equiv 1 \pmod 3  $ & & 1 & $(8k^2+2k+8)/9$ & $(8k^2+14k+14)/9$  \\
$k \equiv 2 \pmod 3  $ & & 1 & $(8k^2-2k+8)/9$ & $(8k^2+10k+2)/9$  \\
\hline \hline
\noalign {\vskip 1mm} 
Degree 7 &  \multicolumn {3} {l} {Generator set for isomorphism class 1} \\ 
\hline 
\noalign {\vskip 1mm} 
$k \equiv 0 \pmod 3  $ & \multicolumn {2} {l} {none}  \\
$k \equiv 1 \pmod 3  $ & & 1 & $(4k-1)/3$ & $(16k^2+4k+7)/9$  \\
$k \equiv 2 \pmod 3  $ & & 1 & $(4k+1)/3$ & $(16k^2-4k+7)/9$  \\
\hline \hline
\noalign {\vskip 1mm} 
Degree 7 & \multicolumn {3} {l} {Generator set for isomorphism class 2} \\ 
\hline 
\noalign {\vskip 1mm} 
$k \equiv 0 \pmod 3  $ & & 1 & $(32k^3-24k^2+36k-27)/27$ & $(32k^3-24k^2+72k-27)/27$  \\
$k \equiv 1 \pmod 3  $ & & 1 & $(32k^3-24k^2+24k-5)/27$ & $(32k^3-24k^2+60k-41)/27$  \\
$k \equiv 2 \pmod 3  $ & & 1 & $(32k^3-24k^2-25)/27$ & $(32k^3-24k^2+36k+11)/27$  \\ 
\hline
\end{tabularx}
\label{table:3F} 
\end{table}


\section {Largest known circulant graphs of degree 8 and 9}

Graphs of dimension 4 have degree 8 or 9. As with Dougherty and Faber's approach for dimension 3, an exhaustive computer search was conducted for potential solutions using all feasible generator sets within relevant ranges. For small diameter this process worked well and enabled the discovery of a family of graphs of degree 8 which are larger than the lower bound $CJ(8,k)$ for any diameter $k$, and similarly for degree 9. However the order of graphs on generator sets of dimension 4 increases with diameter much more quickly than for dimension 3, as well as the number of possible permutations for each order. This means that the calculations to prove the extremality of a candidate graph by continuing the search up to the relevant upper bound, $M_{AC}(d,k)$, quickly exceed the available computing power. Therefore the discovered candidate families of dimension 4 graphs have only been proven to be extremal for a rather limited range of diameters, $k \leq 7$ for degree 8 and $k \leq 6$ for degree 9. The results for degree 8 are shown in Table \ref {table:5A}. 

\begin {table} [!htbp]
\small
\caption{\small Largest known circulant graphs of degree 8.} 
\centering 
\begin{tabular}{ @ { } c  r c l r r l}
\noalign {\vskip 2mm}  
\hline\hline 
\noalign {\vskip 1mm} 
Diameter & order & Distinct & Generator set & Upper bound & Limit of & Status \\ 
$k$ & $L(8,k)$ & solutions & & $M_{AC}(8,k)$ & search & \\
\hline 
\noalign {\vskip 1mm} 
2 & 35 & 2 & $1,6,7,10$ & 41 & 41 & Extremal \\
 & & & $1,7,11,16$ & \\
3 & 104 & 1 & $1,16,20,27$ & 129 & 129 & Extremal \\
4 & 248 & 1 & $1,61,72,76$ & 321 & 321 & Extremal \\
5 & 528 & 1 & $1,89,156,162$ & 681 & 681 & Extremal \\
6 & 984 & 1 & $1,163,348,354$ & 1289 & 1289 & Extremal \\
7 & 1712 & 1 & $1,215,608,616$ & 2241 & 2241 & Extremal  \\
8 & 2768 & 1 & $1,345,1072,1080$ & 3649 &-  & Largest known  \\
9 & 4280 & 1 & $1,429,1660,1670$ & 5641 & - & Largest known  \\
10 & 6320 & 1 & $1,631,2580,2590$ & 8361 &-  & Largest known  \\
11 & 9048 & 1 & $1,755,3696,3708$ & 11969 &-  & Largest known  \\
12 & 12552 & 1 & $1,1045,5304,5316$ & 16641 &-  & Largest known  \\
13 & 17024 & 1 & $1,1217,7196,7210$ & 22569 & - & Largest known  \\
14 & 22568 & 1 & $1,1611,9772,9786$ & 29961 & - & Largest known  \\
15 & 29408 & 1 & $1,1839,12736,12752$ & 39041 &-  & Largest known  \\
16 & 37664 & 1 & $1,2353,16608,16624$ & 50049 & - & Largest known  \\
\hline
\end{tabular}
\label{table:5A}
\end{table}

For degree 8 the following quartic polynomials in $k$ determine the order of these solutions for diameter $k \geq 3$:
\[L(8,k) =
\begin{cases}
\ (k^4+2k^3+6k^2+4k)/2 &\mbox{ for } k \equiv 0 \pmod 2 \\
\ (k^4+2k^3+6k^2+6k+1)/2 &\mbox{ for } k \equiv 1 \pmod 2 \\
\end{cases}
\]
Over the range of diameters checked there is just one unique graph up to isomorphism for each $k \geq 3$. The leading coefficient of 1/2 equals the lower bound value in the formula for $CJ(8,k)$ and is below the upper bound value of 2/3 in $M_{AC}(8,k)$. See Table \ref {table:5B}.

\begin {table} [!htbp]
\small
\caption {\small order, $L(8,k)$, and generator sets of largest known circulant graphs of degree 8 for diameter $k \geq 3$.}
\centering
\begin {tabular} {l l l}
\noalign {\vskip 2mm} 
\hline \hline
\noalign {\vskip 1mm} 
& $k \equiv 0 \pmod 2$ & $k \equiv 1 \pmod 2$ \\
\hline
\noalign {\vskip 2mm} 
order, $L(8,k)$ & $(k^4+2k^3+6k^2+4k)/2$ & $(k^4+2k^3+6k^2+6k+1)/2$ \\
\noalign {\vskip 1mm}
\hline
\noalign {\vskip 1mm} 
Generator & 1 & 1 \\
set & $(k^3+2k^2+6k+2)/2$ & $(k^3+k^2+5k+3)/2$ \\
 & $(k^4+4k^2-8k)/4$ & $(k^4+2k^2-8k-11)/4$ \\
 & $(k^4+4k^2-4k)/4$ & $(k^4+2k^2-4k-7)/4$ \\
\hline
\noalign {\vskip 1mm} 
Lower bound & & \\
$CJ(8,k)$ & $(k^4-15k^3+85k^2-215k+204)/2$ & $(k^4-3k^3+4k^2-2k)/2$ \\
 & for $k \equiv 0 \pmod 4$ &  for $k \equiv 1 \pmod 4$ \\
& $(k^4-7k^3+19k^2-23k+10)/2$ & $k^4-11k^3+46k^2-86k+60)/2$ \\
& for $k \equiv 2 \pmod 4$ & for $k \equiv 3 \pmod 4$ \\
Upper bound & & \\
$M_{AC}(8,k)$ & $(2k^4+4k^3+10k^2+8k+3)/3$ & $(2k^4+4k^3+10k^2+8k+3)/3$ \\
\hline
\end {tabular}
\label {table:5B}
\end {table}

For $k=2$ the formula gives a graph of order 32 whereas the optimal order is 35 with two non-isomorphic solutions. For $3 \leq k \leq 7$ the resulting graphs have been proven extremal by exhaustive computer search up to the upper bound $M_{AC}(8,k)$. The existence of these graphs for all $k$ is proved in the next section. They are the best solutions so far discovered for any $k \geq 3$ and are conjectured to be extremal.

The results for degree 9 are shown in Table \ref {table:5E}.

\begin {table} [!htbp]
\small
\caption{\small Largest known circulant graphs of degree 9.} 
\centering 
\begin{tabular}{ @ { } c  r c l r r l} 
\noalign {\vskip 2mm} 
\hline\hline 
\noalign {\vskip 1mm} 
Diameter & order & Distinct & Generator set* & Upper bound & Limit of & Status \\ 
$k$ & $L(9,k)$ & solutions & & $M_{AC}(9,k)$ & search & \\
\hline 
\noalign {\vskip 1mm} 
2 & 42 & 2 & $1,5,14,17$ & 50 & 50 & Extremal \\
 & & & $2,7,8,10$ & \\
3 & 130 & 4 & $1,8,14,47$ & 170 & 170 & Extremal \\
 & & & $1,8,20,35$ & \\
 & & & $1,26,49,61$ & \\
 & & & $2,8,13,32$ & \\
4 & 320 & 1 & $1,15,25,83$ & 450 & 450 & Extremal \\
5 & 700 & 2 & $1,5,197,223$ & 1002 & 1002 & Extremal \\
 & & & $1,45,225,231$ & \\
6 & 1416 & 1 & $1,7,575,611$ & 1970 & 1970 & Extremal \\
7 & 2548 & 2 & $1,7,521,571$ & 3530 & - & Largest known \\
 & & & $1,581,1021,1029$ & \\
8 & 4304 & 1 & $1,9,1855,1919$ & 5890 & - & Largest known  \\
9 & 6804 & 2 & $1,9,1849,1931$ & 9290 & - & Largest known  \\
 & & & $1,1305,1855,1863$ & \\
10 & 10320 & 1 & $1,11,4599,4699$ & 14002 & - & Largest known  \\
11 & 15004 & 2 & $1,11,3349,3471$ & 20330 & - & Largest known  \\
 & & & $1,4851,6655,6667$ & \\
12 & 21192 & 1 & $1,13,9647,9791$ & 28610 & - & Largest known  \\
13 & 29068 & 2 & $1,13,7741,7911$ & 39210 &  & Largest known  \\
 & & & $1,5083,7929,7943$ & \\
14 & 39032 & 1 & $1,15,18031,18227$ & 52530 & - & Largest known  \\
15 & 51300 & 2 & $1,15,11857,12083$ & 69002 & - & Largest known  \\
 & & & $1,5835,15075,15089$ & \\
16 & 66336 & 1 & $1,17,30975,31231$ & 89090 & - & Largest known  \\
\hline
\multicolumn {7} {l} {\footnotesize * for each isomorphism class of graphs just one of the generator sets is listed} 
\end{tabular}
\label{table:5E}
\end{table}

For degree 9 the following quartic polynomials in $k$ determine the order of the largest known solutions for diameter $k \geq 5$:
\[ L(9,k) = 
\begin{cases}
\ k^4+3k^2+2k &\mbox{ for } k \equiv 0 \pmod 2 \\
\ k^4+3k^2 &\mbox{ for } k \equiv 1 \pmod 2 \\
\end{cases}
\]
This may be compared with the upper bound $M_{AC}(9,k) = (4k^4+20k^2+6)/3$.

Over the range of diameters $k\geq5$ checked there is a unique solution up to isomorphism for each even diameter and two for each odd diameter. For isomorphism class 1, for each diameter there are three or four generator sets which include the element $1$. Formulae for the generator sets for isomorphism class 1 are listed in Table \ref {table:5F}.

\begin {table} [h]
\footnotesize
\caption {\small order, $L(9,k)$, of largest known degree 9 circulant graphs for diameter $k \geq 5$, and generator sets for isomorphism class 1.} 
\centering
\begin{tabularx} {\linewidth} { l l l l l }
\noalign {\vskip 2mm} 
\hline \hline
\noalign {\vskip 1mm} 
& & $k \equiv 0 \pmod 2$ & $k \equiv 1 \pmod 4$ & $k \equiv 3 \pmod 4$ \\
\hline
\noalign {\vskip 2mm} 
order, $L(9,k)$ & & $k^4+3k^2+2k$ & $k^4+3k^2$ & $k^4+3k^2$ \\
\hline
\noalign {\vskip 1mm}
Generator & & 1 & 1 & 1 \\
set 1 & & $k+1$ & $k$ & $k$ \\
& & $(k^4-k^3+2k^2-2)/2$ & $(k^4+k^3+k^2+3k-2)/4$ & $(k^4-k^3+k^2-3k-2)/4$  \\
& & $(k^4-k^3+4k^2-2)/2$ & $(k^4+k^3+5k^2+3k+2)/4$ & $(k^4-k^3+5k^2-3k+2)/4$ \\
\hline \hline
\noalign {\vskip 1mm} 
& & $k \equiv 0 \pmod 2$ & $k \equiv 1 \pmod 2$ & \\
\hline
\noalign {\vskip 1mm} 
Generator & & 1 & 1 & \\
set 2 & & $k^3-k^2+3k-1$ & $k^3+2k$ & \\
& & $k^3-k^2+4k-1$ & $k^3+3k+1$ & \\
& & $3k^3-2k^2+10k-1$ & $k^3+k^2+3k+2$ & \\
\hline
\noalign {\vskip 1mm} 
Generator & & 1 & 1 & \\
set 3 & & $(k^3+2k+2)/2$ & $k^3-k^2+3k-2$ & \\
& & $(k^4-k^3+4k^2-2k+2)/2$ & $k^3+2k$ & \\
& & $(k^4+2k^2+2k-2)/2$ & $k^3+3k-1$ & \\
\hline \hline
\noalign {\vskip 1mm} 
& & $k \equiv 0 \pmod 6$ & $k \equiv 2 \pmod 6$ & \\
\hline
\noalign {\vskip 1mm} 
Generator & & 1 & 1 & \\
set 4* & & $(k^4+k^3+k^2+6k-3)/3$ & $(k^4-k^3+2k^2-2k-3)/3$ & \\
& & $(k^4+k^3+4k^2+3k+3)/3$ & $(k^4-k^3+2k^2+k-3)/3$ & \\
& & $(k^4+k^3+4k^2+6k+3)/3$ & $(k^4-k^3+5k^2-2k+3)/3$ & \\
\hline
\multicolumn {4} {l} {* No solutions for $k \equiv 4 \pmod 6$ or $k \equiv 1 \pmod 2$} 
\end {tabularx}
\label {table:5F}
\end {table}

Factors establishing isomorphisms between the graphs generated by the various generating sets of isomorphism class 1 are listed in Table \ref {table:5G}.

\begin{table} [!htbp]
\small
\caption{\small Factors transforming generator set 1 into the other generator sets for degree 9 graphs of isomorphism class 1, for any diameter $k$.} 
\centering 
\begin{tabularx} {\linewidth}{L l l }
\noalign {\vskip 2mm}  
\hline\hline 
\noalign {\vskip 1mm} 
Diameter, $k$ & For generator set 2 & For generator set 3 \\
\hline  
\noalign {\vskip 1mm} 
$k \equiv 0 \pmod 2$ & $g_4=3k^3-2k^2+10k-1$  & $g_3=(k^4-k^3+4k^2-2k+2)/2$ \\ 
\noalign {\vskip 1mm} 
$k \equiv 1 \pmod 2$ & $g_4=k^3+k^2+3k+2$  & $g_2=k^3-k^2+3k-2$ \\
\noalign {\vskip 1mm} 
\hline\hline 
\noalign {\vskip 1mm} 
Diameter, $k$ & For generator set 4 & \\
\hline  
\noalign {\vskip 1mm} 
$k \equiv 0 \pmod 6$ & $g_2=(k^4+k^3+k^2+6k-3)/3$ &  \\  [1mm]
$k \equiv 2 \pmod 6$ & $g_4=(k^4-k^3+5k^2-2k+3)/3$ &  \\  [1mm]
\hline 

\end{tabularx}
\label{table:5G} 
\end{table}

For isomorphism class 2 the formulae for the generator sets were more difficult to discover. It emerges that the solution depends on the diameter $k \pmod {14}$. This is a most surprising result for a system with dimension 4 and degree 9, neither value having a common factor with 7. For each odd diameter there are one or two generator sets which include the element 1. One sequence omits the case $k\equiv 9 \pmod{14}$ with each solution having a pair of generators differing by $k+1$. The other omits the case $k\equiv 5 \pmod{14}$ with each having a pair differing by $k-1$. The formulae for these generator sets are listed in Table \ref {table:5J}.

\begin {table} [h]
\footnotesize
\caption {\small Generator sets for isomorphism class 2 of degree 9 circulant graphs of order $L(9,k)$ for diameter $k \geq 5$.} 
\centering
\begin{tabularx} {\linewidth} { l l l l l }
\noalign {\vskip 2mm} 
\hline \hline
\noalign {\vskip 1mm} 
Diameter, $k$ & & Generator set 1 && Generator set 2  \\
\hline
\noalign {\vskip 1mm}
$k\equiv 1 \pmod{14}$ & & 1 & & 1  \\
& & $(k^4+k^3+5k^2)/7$ & & $(k^4-3k^3+2k^2-7k)/7$  \\
& & $(k^4+k^3+5k^2+7k+7)/7$ && $(2k^4+k^3+4k^2)/7$   \\
& & $(3k^4+3k^3+8k^2+7k)/7$ && $(2k^4+k^3+4k^2+7k-7)/7$  \\
\noalign {\vskip 2mm} 
$k\equiv 3 \pmod{14}$ & & 1 && 1  \\
& & $(k^4-k^3+k^2-7k-7)/7$ & & $(2k^4-3k^3+5k^2-7k)/7$  \\
& & $(k^4-k^3+k^2)/7$ && $(3k^4-k^3+11k^2-7k+7)/7$   \\
& & $(3k^4-3k^3+10k^2-7k)/7$ && $(3k^4-k^3+11k^2)/7$  \\
\noalign {\vskip 2mm} 
$k\equiv 5 \pmod{14}$ & & 1 && *  \\
& & $(k^4-3k^3+4k^2-7)/7$ & &  \\
& & $(2k^4+k^3+8k^2)/7$ &&    \\
& & $(2k^4+k^3+8k^2+7k+7)/7$ &&  \\
\noalign {\vskip 2mm} 
$k\equiv 7 \pmod{14}$ & & 1 && 1  \\
& & $(2k^4-3k^3+7k^2-7k)/7$ & & $(2k^4+3k^3+7k^2+7k)/7$  \\
& & $(3k^4-k^3+7k^2-7k-7)/7$ && $(3k^4+k^3+7k^2)/7$   \\
& & $(3k^4-k^3+7k^2)/7$ && $(3k^4+k^3+7k^2+7k-7)/7$  \\
\noalign {\vskip 2mm} 
$k\equiv 9 \pmod{14}$ & & *  && 1  \\
& &                                        & & $(k^4+3k^3+4k^2+7k)/7$  \\
& &                                         && $(2k^4-k^3+8k^2-7k+7)/7$   \\
& &                                        && $(2k^4-k^3+8k^2)/7$  \\
\noalign {\vskip 2mm} 
$k\equiv 11 \pmod{14}$ & & 1 && 1  \\
& & $(2k^4+3k^3+5k^2+7k)/7$ & & $(k^4+k^3+k^2)/7$  \\
& & $(3k^4+k^3+11k^2)/7$ && $(k^4+k^3+k^2+7k-7)/7$   \\
& & $(3k^4+k^3+11k^2+7k+7)/7$ && $(3k^4+3k^3+10k^2+7k)/7$  \\
\noalign {\vskip 2mm} 
$k\equiv 13 \pmod{14}$ & & 1 && 1  \\
& & $(k^4+3k^3+2k^2+7)/7$ & & $(k^4-k^3+5k^2-7k+7)/7$  \\
& & $(2k^4-k^3+4k^2-7k-7)/7$ && $(k^4-k^3+5k^2)/7$   \\
& & $(2k^4-k^3+4k^2)/7$ && $(3k^4-3k^3+8k^2-7k)/7$  \\
\noalign {\vskip 1mm} 
\hline
\multicolumn {5} {l} {* No solutions for $k \equiv 9 \pmod {14}$ for generator set 1 or for $k \equiv 5 \pmod {14}$ for generator set 2} 
\end {tabularx}
\label {table:5J}
\end {table}

Factors establishing isomorphisms between the graphs generated by the generating sets of isomorphism class 2 are listed in Table \ref {table:5K}.

\begin{table} [!htbp]
\small
\caption{\small Factors transforming generator set 1 into generator set 2 for degree 9 graphs of isomorphism class 2, for any diameter $k$.} 
\centering
\begin{tabular} { l l l }
\noalign {\vskip 2mm} 
\hline \hline
\noalign {\vskip 1mm} 
Diameter, $k$ & & Factor (elements from generator set 2)   \\
\hline
\noalign {\vskip 1mm}
$k\equiv 1 \pmod{14}$ & &  $g_4=(2k^4+k^3+4k^2+7k-7)/7$   \\
$k\equiv 3 \pmod{14}$ & & $g_3=(3k^4-k^3+11k^2-7k+7)/7$   \\
$k\equiv 5 \pmod{14}$ & & *   \\
$k\equiv 7 \pmod{14}$ & & $g_4=(3k^4+k^3+7k^2+7k-7)/7$  \\
$k\equiv 9 \pmod{14}$ & & *   \\
$k\equiv 11 \pmod{14}$ & & $g_3=(k^4+k^3+k^2+7k-7)/7$   \\
$k\equiv 13 \pmod{14}$ & & $g_2=(k^4-k^3+5k^2-7k+7)/7$  \\
\noalign {\vskip 1mm} 
\hline
\multicolumn {3} {l} {* No solutions for $k \equiv 9 \pmod {14}$ for generator set 1 or for $k \equiv 5 \pmod {14}$ for generator set 2} 
\end {tabular}
\label{table:5K} 
\end{table}

In order to establish that the graphs of isomorphism class 2 are not isomorphic to class 1, at least for the diameters checked, spectral analysis is used. The spectrum of a circulant graph may be determined straightforwardly using the following presentation by Nguyen, \cite {Nguyen}, of the standard formula for the eigenvalues.

\begin{theorem}
(Nguyen) Let A be the adjacency matrix of a circulant graph on n vertices, where $c_i=c_{n-i}=1$ if vertices i and n-i are adjacent and $0$ otherwise, and let Sp(A) be its spectrum. If n is odd then
\[Sp(A)= \Bigg \{ \sum _{i=1}^{(n-1)/2} 2c_i \mbox{\rm cos} \frac{2li\pi}{n} : 1\leq l \leq n \Bigg \}.\]
If n is even then
\[Sp(A)= \Bigg \{ \sum _{i=1}^{(n-2)/2} 2c_i \mbox{\rm cos} \frac{2li\pi}{n}+c_{n/2} \mbox{\rm cos} l\pi : 1\leq l \leq n \Bigg \}.\]

\end{theorem}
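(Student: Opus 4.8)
The plan is to invoke the classical fact that every circulant matrix is diagonalised by the Fourier basis, and then to reduce the resulting complex eigenvalues to the real cosine form by exploiting the symmetry $c_i=c_{n-i}$ that holds precisely because the graph is undirected. Write $\omega=e^{2\pi i/n}$ for a primitive $n$-th root of unity, and let the first row of $A$ be $(c_0,c_1,\ldots,c_{n-1})$, noting $c_0=0$ since there are no loops.

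First I would verify directly that for each $l$ with $0\le l\le n-1$ the vector $v_l=(1,\omega^l,\omega^{2l},\ldots,\omega^{(n-1)l})^{T}$ is an eigenvector of $A$. With the convention $A_{r,s}=c_{s-r}$ (indices mod $n$), the $r$-th entry of $Av_l$ is $\sum_{s} c_{s-r}\omega^{ls}$; reindexing by $j=s-r$ factors out $\omega^{lr}=(v_l)_r$, so that $Av_l=\lambda_l v_l$ with $\lambda_l=\sum_{j=0}^{n-1}c_j\omega^{lj}$. Because the $v_l$ are linearly independent (a Vandermonde argument), these are exactly the $n$ eigenvalues, realised as $l$ runs over any complete residue system modulo $n$; taking $1\le l\le n$ as in the statement is equivalent to $0\le l\le n-1$.

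The remaining work is to rewrite $\lambda_l$ in real form. Using $c_j=c_{n-j}$, I would pair the index $j$ with $n-j$; since $\omega^{l(n-j)}=\omega^{-lj}$ and $\omega^{lj}+\omega^{-lj}=2\cos(2\pi lj/n)$, each such pair contributes $2c_j\cos(2\pi lj/n)$. When $n$ is odd, every index in $\{1,\ldots,n-1\}$ lies in a distinct pair $\{j,n-j\}$, which gives the first displayed formula with the sum running to $(n-1)/2$. When $n$ is even, the index $j=n/2$ is its own partner and must be handled separately: there $\omega^{ln/2}=e^{i\pi l}=\cos(l\pi)$, contributing the isolated term $c_{n/2}\cos(l\pi)$, while the paired indices $1\le j\le n/2-1$ give the sum to $(n-2)/2$, yielding the second formula.

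There is no serious obstacle here, since the result is a standard specialisation of the spectral theory of circulant matrices; the only point demanding care is the bookkeeping of the pairing in the even case, where the self-inverse element $n/2$ (the unique element of order $2$ noted in Section 2) produces the single $\cos(l\pi)$ term rather than a conjugate cosine pair.
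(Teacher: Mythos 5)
Your proof is correct. Note, however, that the paper itself offers no proof of this statement: it is quoted as a known result from Nguyen's thesis (``the standard formula for the eigenvalues''), so there is no internal argument to compare against. Your argument---diagonalising the circulant matrix by the Fourier vectors $v_l$, identifying $\lambda_l=\sum_j c_j\omega^{lj}$ via the Vandermonde independence of the $v_l$, and then folding the sum with the symmetry $c_j=c_{n-j}$ into cosine pairs, with the self-paired index $j=n/2$ producing the lone $c_{n/2}\cos(l\pi)$ term in the even case---is exactly the standard derivation that the cited formula rests on, and your bookkeeping in both the odd and even cases is accurate.
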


Some differences in the spectral analysis between the two classes are listed in Table \ref {table:5H}, proving that the two isomorphism classes are distinct. The same approach can be used to confirm the two isomorphism classes for degree 6 and 7.

\begin {table} [!htbp]
\small
\caption{\small Spectral analysis of graphs of isomorphism classes 1 and 2 for diameter 5, 7, 9 and 11 .} 
\centering 
\begin{tabularx}{\linewidth}{ c c c c c c c c c c c } 
\noalign {\vskip 2mm} 
\hline\hline 
\noalign {\vskip 1mm}
Diameter & order & &  \multicolumn {2} {l} {Positive eigenvalues} & &  \multicolumn {2} {l} {Zero eigenvalues} & &  \multicolumn {2} {l} {Negative eigenvalues} \\ 
$k$ & $L(9,k)$ & & Class 1 & Class 2 & & Class 1 & Class 2 & & Class 1 & Class 2 \\ 
\hline 
\noalign {\vskip 1mm} 
5 & 700 & & 315 & 319 & & 0 & 0 & & 385 & 381 \\
7 & 2548 & & 1215 & 1211 & & 0 & 0 & & 1333 & 1337 \\
9 & 6804 & & 3343 & 3347 & & 2 & 0 & & 3459 & 3457 \\
11 & 15004 &&  7539 & 7529 & & 0 & 0 & & 7465 & 7475 \\
\noalign {\vskip 1mm}
\hline

\end{tabularx}
\label{table:5H}
\end{table}

For diameter $k \leq 4$ the graphs determined by the formulae are not optimal. For $k=5$ and $k=6$ the resulting graphs have been proven extremal by checking up to the upper bound $M_{AC}(9,k)$. The existence of these graphs has also been confirmed by computer for all diameters $k \leq 80$. They are the largest circulant graphs so far discovered for any diameter $k \geq 5$ and are conjectured to be extremal.


\section {Existence proof for the degree 8 graph of order $L(8,k)$ for all diameters}

In this section we prove the existence of the degree 8 circulant graph of order $L(8,k)$ for all diameters $k \geq 2$. This proof closely follows the approach taken by Dougherty and Faber in their proof of the existence of the degree 6 graph of order $DF(6,k)$ for all diameters $k \geq 2$ \cite{Dougherty}. 

\begin{theorem}
For all $k\geq 2$, there is an undirected Cayley graph on four generators of a cyclic group which has diameter k and order $L(8,k)$, where
\[ L(8,k)=
\begin{cases}
(k^4+2k^3+6k^2+4k)/2 &\mbox{ if } k\equiv 0  \pmod 2\\
(k^4+2k^3+6k^2+6k+1)/2 &\mbox{ if } k\equiv 1  \pmod 2 
\end{cases}
\]
Moreover for $k \equiv 0 \pmod 2$ a generator set is
$\{1, (k^3+2k^2+6k+2)/2, (k^4+4k^2-8k)/4, (k^4+4k^2-4k)/4\}$, \\
and for $k\equiv 1 \pmod 2$,
$\{1, (k^3+k^2+5k+3)/2, (k^4+2k^2-8k-11)/4, (k^4+2k^2-4k-7)/4\}$.
\label{theorem:A}
\end{theorem}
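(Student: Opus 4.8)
The plan is to reduce the diameter bound to a covering statement via Theorem~\ref{theoremD} and then to prove that covering by an explicit, hierarchical reduction of residues. Write $\phi\colon\Z^4\to\Z_n$ for the homomorphism sending $\mathbf{e}_i$ to the $i$-th listed generator (so $g_1=1$), put $N=\ker\phi$, and take the graph to be $X(\Z_n,C)$ with $C=\{\pm g_1,\pm g_2,\pm g_3,\pm g_4\}$. Its order is then $n=L(8,k)$ by construction, and since $g_1=1$ the map $\phi$ is onto $\Z_n$ with $[\Z^4:N]=n$. By Theorem~\ref{theoremD} the graph has diameter at most $k$ if and only if $S_{4,k}+N=\Z^4$; equivalently, every residue $m\in\Z_n$ is the image under $\phi$ of some lattice point of $\ell^1$-norm at most $k$. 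Establishing this surjectivity of $\phi$ restricted to $S_{4,k}$ is the heart of the argument.

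First I would record the relations in $N$ that drive the reduction. A direct computation gives $k g_2\equiv -k\pmod n$, so $(k,k,0,0)\in N$, and $g_4-g_3=k$, so $(k,0,1,-1)\in N$; a third, exact identity $2g_3=(k-2)g_2+(k^2+k+2)$ links the two large generators $g_3,g_4\approx k^4/4$ down to the medium generator $g_2\approx k^3/2$ and ultimately to $g_1=1$. Together these organise $\Z_n$ into a mixed-radix hierarchy $1\ll g_2\ll g_3\approx g_4$. The main step is then to build, for each residue $m$, a representative of $\ell^1$-cost at most $k$ by greedy peeling: choose the coefficients of $g_4$ and $g_3$ first to bring $m$ into a band of width $O(k^3)$, then use $g_2$ to reduce to a band of width $O(k)$, and finally absorb the remainder with multiples of $g_1=1$. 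Concretely this amounts to exhibiting an explicit transversal $D$ of the cosets of $N$ with $D\subseteq S_{4,k}$, adapted to the generator sizes, and verifying via the three relations above that the running cost never exceeds $k$.

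For the lower bound, a pure counting argument does not suffice, since one checks $S(4,k-1)>L(8,k)$ for large $k$, so a degree-$8$ diameter-$(k-1)$ graph could in principle be this large. Instead, because the cost bound $k$ turns out to be attained for at least one residue, the same bookkeeping that proves diameter $\le k$ identifies an explicit vertex at $\ell^1$-distance exactly $k$; this yields diameter $\ge k$ and hence diameter equal to $k$.

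The hard part, and the source of the full proof's length, is the boundary analysis. The generators do not form a clean base-$b$ system as in the Chen--Jia construction, so the carries induced by $(k,k,0,0)$, $(k,0,1,-1)$ and the $2g_3$ identity can push one coordinate over the step budget precisely when $m$ lies near the edge of a band. Controlling every such overflow forces a case split on the residue's position in the hierarchy and on all sign patterns of the coefficients, carried out separately for even and odd $k$ since the two generator sets differ. I expect this exhaustive boundary bookkeeping---rather than any single conceptual difficulty---to be the principal obstacle, which is why only a representative subcase is presented in full here, with the remaining cases deferred to \cite{Lewis}.
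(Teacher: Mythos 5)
Your reduction via Theorem~\ref{theoremD}, and your observation that starting from the generator set on $\Z_n$ makes the order and cyclicity automatic, match the paper's setup (the paper instead builds a lattice $L_k$ from four explicit basis vectors and computes a determinant, but your direction is equally valid). The genuine gap is in the covering step $S_{4,k}+N=\Z^4$, which is the entire content of the theorem, and your greedy scheme cannot deliver it as described. The generator set has no elements of intermediate size: $g_2\approx k^3/2$ and $g_1=1$, with nothing of order $k$ or $k^2$. So after your second stage the best you can guarantee is a remainder of size about $g_2/2\approx k^3/4$ (consecutive multiples of $g_2$ are $\approx k^3/2$ apart), not $O(k)$, and absorbing that with unit steps costs $\approx k^3/4\gg k$. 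The fine structure of step size $k$ actually comes from redistributing weight between $g_3$ and $g_4$ (since $g_4-g_3=k$ for even $k$), not from $g_2$, and making the budget close under that constraint is precisely the hard part. This is what the paper's mechanism handles: sixteen lattice vectors $\pm\mathbf{v}_1,\dots,\pm\mathbf{v}_8$, one strictly inside each orthant of $\Z^4$, all coordinates of absolute value at most $a+2$ with $a\approx k/2$, each of $\ell^1$-weight exactly $2k+1$; one reduces $\mathbf{x}$ by sign-matched subtraction until all coordinates are small, and then if $\mathbf{x}$ lies between $\mathbf{0}$ and such a $\mathbf{v}$, the identity $\delta(\mathbf{0},\mathbf{x})+\delta(\mathbf{x},\mathbf{v})=2k+1$ forces one of the two costs to be $\le k$. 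There is no clean mixed-radix hierarchy to exploit, and the tightness of the covering ($\vert S_{4,k}\vert / L(8,k)\to 4/3$) leaves no room for the constant-factor losses a band-by-band greedy incurs. Two further problems: your three relations are parity-specific (for odd $k$ one has $g_4-g_3=k+1$ and $kg_2\not\equiv -k\pmod n$), and you cannot defer your ``remaining cases'' to \cite{Lewis}, because that reference carries out the orthant-by-orthant geometric analysis, not the arithmetic peeling you propose, so the deferred cases of your argument exist nowhere.

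Separately, your lower-bound argument is invalid. Exhibiting a residue on which your procedure spends cost exactly $k$ does not show that residue is at distance $k$; it may have a cheaper representation by a different word, and ruling that out is a nonexistence statement your bookkeeping never addresses. (You are right that counting fails, since $S(4,k-1)>L(8,k)$ for large $k$; note that the paper's own proof establishes only diameter at most $k$, which is what matters for its role as a lower bound on $CC(8,k)$, so this part of your proposal is attempting something extra --- but the attempt as written does not prove it.)
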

\begin{proof}
We will show the existence of four-dimensional lattices $L_k \subseteq \Z^4$ such that $\Z^4/L_k$ is cyclic, $S_{f,k}+L_k=\Z^4$, where $S_{f,k}$ is the set of points in $\Z^4$ at a distance of at most $k$ from the origin under the $l^1$ (Manhattan) metric, and $\vert \Z^4 : L_k\vert = L(8,k) $ as specified in the theorem. Then, by Theorem \ref {theoremD}, the resultant Cayley graph has diameter at most $k$. 

Let $a=\begin {cases} k/2 & \mbox { for } k \equiv 0 \pmod 2\\ (k+1)/2 & \mbox { for } k \equiv 1 \pmod 2. \end {cases}$

For $k \equiv 0 \pmod 2$ let $L_k$ be defined by four generating vectors as follows:
\[
\begin{array}{rcl}
\textbf{v}_1&=&(-a-1,a+1,a,-a+1)\\
\textbf{v}_2&=&(a-1,a+1,a+1,-a)\\
\textbf{v}_3&=&(-a-1,-a+1,a+1,-a)\\
\textbf{v}_4&=&(-a,-a,a,a+1)
\end{array}
\] 
Then the following vectors are in $L_k$:
\[
\begin{array}{l}
-(2a^2+2a+1)\textbf{v}_1 + (2a^2+a+2)\textbf{v}_2 - (a+2)\textbf{v}_3 +\textbf{v}_4 = (4a^3+4a^2+6a+1, -1, 0, 0),\\
-(2a^3-1)\textbf{v}_1 + (2a^3-a^2+2a-2)\textbf{v}_2 - (a^2+a-1)\textbf{v}_3 +(a-1)\textbf{v}_4 = (4a^4+4a^2-4a, 0, -1, 0),\\
-2a^3\textbf{v}_1 + (2a^3-a^2+2a-1)\textbf{v}_2 - (a^2+a-1)\textbf{v}_3 +(a-1)\textbf{v}_4 = (4a^4+4a^2-2a, 0, 0, -1)\\
\end{array}
\]
Hence we have $\textbf{e}_2 =  (4a^3+4a^2+6a+1)\textbf{e}_1,  \textbf{e}_3 =  (4a^4+4a^2-4a)\textbf{e}_1$  and $\textbf{e}_4 =  (4a^4+4a^2-2a)\textbf{e}_1$ in $\Z^4/L_k$, and so $\textbf{e}_1$ generates $\Z^4/L_k$.

Also det $\left ( \begin{array} {c} \textbf{v}_1 \\ \textbf{v}_2 \\ \textbf{v}_3 \\ \textbf{v}_4 \end {array} \right ) = $ det
$\left (
\begin{array} {l r r r}
8a^4+8a^3+12a^2+4a & 0 & 0 & 0 \\
4a^3+4a^2+6a+1 & -1 & 0 & 0 \\
4a^4+4a^2-4a & 0 & -1 & 0 \\
4a^4+4a^2-2a & 0 & 0 & -1 
\end {array} \right ) 
\newline
\newline
= -(8a^4+8a^3+12a^2+4a) = -(k^4+2k^3+6k^2+4k)/2= -L(8,k)$, as in the statement of the theorem. 

Thus $\Z^4/L_k$ is isomorphic to $\Z_{L(8,k)}$ via an isomorphism taking $\textbf{e}_1, \textbf{e}_2, \textbf{e}_3, \textbf{e}_4$ to $1$, $4a^3+4a^2+6a+1, 4a^4+4a^2-4a,  4a^4+4a^2-2a$. As $a=k/2$ this gives the first generator set specified in the theorem: $\{1, (k^3+2k^2+6k+2)/2, (k^4+4k^2-8k)/4, (k^4+4k^2-4k)/4\}$.

Similarly for $k \equiv 1 \pmod 2$ let $L_k$ be defined by four generating vectors as follows:
\[
\begin{array}{rcl}
\textbf{v}_1&=&(-a+1,a+1,-a+1,a)\\
\textbf{v}_2&=&(a+1,a+1,-a+2,a-1)\\
\textbf{v}_3&=&(-a-1,a-1,a-1,-a)\\
\textbf{v}_4&=&(-a,a,a,a-1)
\end{array}
\]
In this case the following vectors are in $L_k$:
\newline
$-(2a^2+a+2)\textbf{v}_1 + (2a^2+2a+1)\textbf{v}_2 - a\textbf{v}_3 -\textbf{v}_4 = (4a^3-4a^2+6a-1, -1, 0, 0)$,
\newline
$-(2a^3-a^2-2a-2)\textbf{v}_1 + (2a^3-4a-1)\textbf{v}_2 - (a^2-a-1)\textbf{v}_3 -(a-1)\textbf{v}_4 = (4a^4-8a^3+8a^2-8a, 0, -1, 0)$,
\newline
$-(2a^3-a^2-2a-1)\textbf{v}_1 + (2a^3-4a)\textbf{v}_2 - (a^2-a-1)\textbf{v}_3 -(a-1)\textbf{v}_4 = (4a^4-8a^3+8a^2-6a, 0, 0, -1)$.

Hence we have $\textbf{e}_2 = (4a^3+4a^2+6a-1)\textbf{e}_1$,  $\textbf{e}_3 = (4a^4-8a^3+8a^2-8a)\textbf{e}_1$ and $\textbf{e}_4=  (4a^4-8a^3+8a^2-6a)\textbf{e}_1$, in $\Z^4/L_k$, and so $\textbf{e}_1$ generates $\Z^4/L_k$.

Also det $\left ( \begin{array} {c} \textbf{v}_1 \\ \textbf{v}_2 \\ \textbf{v}_3 \\ \textbf{v}_4 \end {array} \right ) = $ det
$\left ( \begin{array} {l r r r}
8a^4-8a^3+12a^2-4a & 0 & 0 & 0 \\
4a^3-4a^2+6a-1 & -1 & 0 & 0 \\
4a^4-8a^3+8a^2-8a & 0 & -1 & 0 \\
4a^4-8a^3+8a^2-6a & 0 & 0 & -1 \end {array} \right ) 
\newline
\newline
= -(8a^4-8a^3+12a^2-4a) = -(k^4+2k^3+6k^2+6k+1)/2= -L(8,k)$, as in the statement of the theorem.

Thus $\Z^4/L_k$ is isomorphic to $\Z_{L(8,k)}$ with generators $1, 4a^3-4a^2+6a-1, 4a^4-8a^3+8a^2-8a, 4a^4-8a^3+8a^2-6a$.
As $a=(k+1)/2$ in this case, this gives the second generator set specified in the theorem: $\{1, (k^3+k^2+5k+3)/2, (k^4+2k^2-8k-11)/4, (k^4+2k^2-4k-7)/4\}$.

It remains to show that $S_{f,k}+L_k=\Z^4$. First we consider the case $k\equiv 0 \pmod 2$. For $k=2$, it is straightforward to show directly that $\Z_{32}$ with generators $1, 4, 6, 15$ has diameter 2. So we assume $k\geq4$, so that $a \geq 2$. Now let
\[
\begin{array}{rcl}
\textbf{v}_5&=\textbf{v}_1-\textbf{v}_3+\textbf{v}_4=&(-a,a,a-1,a+2)\\
\textbf{v}_6&=\textbf{v}_1-\textbf{v}_2-\textbf{v}_4=&(-a,a,-a-1,-a)\\
\textbf{v}_7&=\textbf{v}_1-\textbf{v}_2-\textbf{v}_3=&(-a+1,a-1,-a-2,a+1)\\
\textbf{v}_8&=\textbf{v}_2-\textbf{v}_3+\textbf{v}_4=&(a,a,a,a+1)
\end{array}
\]
with $\textbf{v}_1, \textbf{v}_2, \textbf{v}_3, \textbf{v}_4$ as defined for $k \equiv 0$ (mod 2). Then the 16 vectors $\pm\textbf{v}_i$ for $i=1,...,8$ provide one element of $L_k$ lying strictly within each of the 16 orthants of $\Z^4$. Most of the coordinates of these vectors have absolute value at most $a+1$. Only $\pm \textbf{v}_5$ and $\pm \textbf{v}_7$ each have one coordinate with absolute value equal to $a+2$.

Now we consider the case $k\equiv 1 \pmod 2$. For $k=3$ it may be shown directly that $\Z_{104} $ with generators $1, 16, 20, 27$ has diameter 3. So we assume $k \geq5$, so that $a \geq3$, and let 
\[
\begin{array}{rcl}
\textbf{v}_5&=\textbf{v}_1-\textbf{v}_2-\textbf{v}_4=&(-a,-a,-a-1,-a+2)\\
\textbf{v}_6&=\textbf{v}_2+\textbf{v}_3-\textbf{v}_4=&(a,a,-a+1,-a)\\
\textbf{v}_7&=\textbf{v}_1+\textbf{v}_3-\textbf{v}_4=&(-a,a,-a,-a+1)\\
\textbf{v}_8&=\textbf{v}_1-\textbf{v}_2-\textbf{v}_3=&(-a+1,-a+1,-a,a+1)
\end{array}
\]
with $\textbf{v}_1, \textbf{v}_2, \textbf{v}_3, \textbf{v}_4$ as defined for $k \equiv 1$ (mod 2), so that the 16 vectors $\pm \textbf{v}_i$ provide one element of $\textbf{L}_k$ lying strictly within each of the orthants of $\Z^4$. In this case all the coordinates of these vectors have absolute value at most $a+1$.

We must show that each $\textbf{x}\in \Z^4$ is in $S_{f,k}+L_k$, which means that for any $\textbf{x} \in \Z^4$ we need to find a $\textbf{w} \in L_k$ such that $\textbf{x}-\textbf{w} \in S_{f,k}$. However $\textbf{x}-\textbf{w} \in S_{f,k}$ if and only if $\delta (\textbf{x},\textbf{w})\leq k$, where $\delta$ is the $l^1$ metric on $\Z^4$.
If $\textbf{x}, \textbf{y}, \textbf{z} \in \Z^4$ and each coordinate of $\textbf{y}$ lies between the corresponding coordinate of $\textbf{x}$ and $\textbf{z}$ or is equal to one of them, then $\delta (\textbf{x},\textbf{y})+\delta(\textbf{y},\textbf{z})=\delta(\textbf{x},\textbf{z})$. In such a case we say that  \textquotedblleft $\textbf{y}$ lies between $\textbf{x}$ and $\textbf{z}$\textquotedblright.

For any $\textbf{x}\in \Z^4$, we reduce $\textbf{x}$ by adding appropriate elements of $L_k$ until the resulting vector lies within $l^1$-distance $k$ of $\textbf{0}$ or some other element of $L_k$.
The first stage is to reduce $\textbf{x}$ to a vector whose coordinates all have absolute value at most $a+1$. If $\textbf{x}$ has a coordinate with absolute value above $a+1$, then let $\textbf{v}$ be one of the vectors $\pm \textbf{v}_i(1 \leq i \leq 8)$ such that the coordinates of  $\textbf{v}$ have the same sign as the corresponding coordinates of $\textbf{x}$. If a coordinate of $\textbf{x}$ is 0 then either sign is allowed for $\textbf{v}$ as long as the corresponding coordinate of $\textbf{v}$ has absolute value $\leq a+1$. So in the case $k \equiv 0 \pmod 2$ if the $\textbf{e}_3$ coordinate of $\textbf{x}$ is 0 then we  avoid $\textbf{v}_7$ and take $\textbf{v}_5$ instead. Also if the $\textbf{e}_4$ coordinate of $\textbf{x}$ is 0 (or both $\textbf{e}_3$ and $\textbf{e}_4$ coordinates are 0) then instead of $\textbf{v}_5$ we take $\textbf{v}_1$.

Now consider $\textbf{x}'=\textbf{x}-\textbf{v}$. If a coordinate of $\textbf{x}$ has absolute value $s, 1\leq s\leq a+1$, then the corresponding coordinate of $\textbf{x}'$ will have absolute value $s'\leq a+1$ because of the sign matching and the fact that the coordinates of $\textbf{v}$ have absolute value $\leq a+2$. If a coordinate of $\textbf{x}$ has absolute value $s=0$, then as indicated above, the corresponding value of $\textbf{x}'$ will have absolute value $s' \leq a+1$ because $\textbf{v}$ is chosen such that the corresponding coordinate has absolute value $\leq a+1$. If a coordinate of $\textbf{x}$ has absolute value $s>a+1$, then the corresponding coordinate of $\textbf{x}'$ will be strictly smaller in absolute value. Therefore repeating this procedure will result in a vector whose coordinates all have absolute value at most $a+1$.

If the resulting vector $\textbf{x}'$ lies between $\textbf{0}$ and $\textbf{v}$, where $\textbf{v}=\pm\textbf{v}_i$ for some $i$, then we have $\delta(\textbf{0},\textbf{x}')+\delta(\textbf{x}',\textbf{v})=\delta(\textbf{0},\textbf{v})$. For $k\equiv 0 \pmod 2$ all of the vectors $\textbf{v}$ satisfy $\delta(\textbf{0},\textbf{v})=4a+1$, and for $k\equiv 1 \pmod 2$ they all satisfy $\delta(\textbf{0},\textbf{v})=4a-1$. So in either case we have $\delta(\textbf{0},\textbf{v})=2k+1$. Since $\delta(\textbf{0},\textbf{x}')$ and $\delta(\textbf{x}',\textbf{v})$ are both non-negative integers, one of them must be at most $k$, so that $\textbf{x}' \in S_{f,k}+L_k$. Hence we also have $\textbf{x} \in S_{f,k}+L_k$ as required.

Now we are left with the case where the absolute value of each coordinate of the reduced $\textbf{x}$ is at most $a+1$, and $\textbf{x}$ is in the orthant of $\textbf{v}$, where $\textbf{v} = \pm \textbf{v}_i$ for some $i \leq 8$ but does not lie between $\textbf{0}$ and $\textbf{v}$.
Since $L_k$ is centrosymmetric we only need to consider the eight orthants containing $\textbf{v}_1, ..., \textbf{v}_8$.
For both cases $k \equiv0$ and $k\equiv1 \pmod 2$ the exceptions need to be considered for each orthant in turn. To avoid this paper being unduly long only the exceptions for the orthant of $\textbf{v}_1$ for $k \equiv 0$ (mod 2) and for $k \equiv 1$ (mod 2) are included here. The other orthants are handled similarly. A full proof including all orthants for both cases is available on ArXiv \cite {Lewis}.

So suppose that $k \equiv 0 \pmod 2$ and $\textbf{x}$ lies within the orthant of $\textbf{v}_1$, but not between $\textbf{0}$ and $\textbf{v}_1$. Then as $\textbf{v}_1 = (-a-1,a+1,a,-a+1)$, the third coordinate of $\textbf{x}$ is equal to $a+1$ or the fourth coordinate equals $-a$ or $-a-1$. We now distinguish three cases.

Case 1: $\textbf{x}=(-r,s,a+1,-u)$ where $0\leq r, s\leq a+1$ and $a \leq u \leq a+1$.
Let $\textbf{x}' = \textbf{x} - \textbf{v}_1 = (a+1-r, s-a-1, 1, a-1-u)$, which lies between $\textbf{0}$ and $-\textbf{v}_7$ unless $r \leq 1$ or $s \leq 1$. Let $\textbf{x}'' = \textbf{x}'+\textbf{v}_7=(2-r,s-2,-a-1,2a-u)$.
If $r \leq 1$ and $s \leq 1$ then $\textbf{x}''$ lies between $\textbf{0}$ and $-\textbf{v}_1$ unless $u=a$, in which case let $\textbf{x}''' = \textbf{x}''+ \textbf{v}_1 = (1-a-r, a-1+s, -1, a+1-u)$ which lies between $\textbf{0}$ and $\textbf{v}_7$.
If $r \leq 1$ and $s \geq 2$ then $\textbf{x}''$ lies between $\textbf{0}$ and $-\textbf{v}_3$.
If $r \geq 2$ and $s \leq 1$ then $\textbf{x}''$ lies between $\textbf{0}$ and $-\textbf{v}_2$.

Case 2: $\textbf{x}=(-r,s,a+1,-u)$ where $0\leq r, s\leq a+1$ and $0 \leq u \leq a-1$.
Let $\textbf{x}' = \textbf{x} - \textbf{v}_1 = (a+1-r, s-a-1, 1, a-1-u)$, which lies between $\textbf{0}$ and $-\textbf{v}_6$ unless $r = 0$ or $s = 0$.
Let $\textbf{x}'' = \textbf{x}'+\textbf{v}_6=(1-r,s-1,-a,-u-1)$. If $r = 0$ and $s = 0$ then $\textbf{x}''$ lies between $\textbf{0}$ and $-\textbf{v}_5$.
If $r = 0$ and $s \geq 1$ then $\textbf{x}''$ lies between $\textbf{0}$ and $-\textbf{v}_4$.
If $r \geq 1$ and $s = 0$ then $\textbf{x}''$ lies between $\textbf{0}$ and $-\textbf{v}_8$.

Case 3: $\textbf{x}=(-r,s,t,-u)$ where $0\leq r, s\leq a+1$ and $0 \leq t \leq a$ and $a \leq u \leq a+1$.
Let $\textbf{x}' = \textbf{x} - \textbf{v}_1 = (a+1-r, s-a-1, t-a, a-1-u)$, which lies between $\textbf{0}$ and $-\textbf{v}_5$ unless $r = 0$ or $s = 0$ or $t=0$. If $r=0$ and $s=0$, then $\textbf{x}$ lies between $\textbf{0}$ and $-\textbf{v}_7$.
Let $\textbf{x}'' = \textbf{x}'+\textbf{v}_5=(1-r,s-1,t-1,2a+1-u)$. If $r = 0, s\geq 1$ and $t \geq 1$ then $\textbf{x}''$ lies between $\textbf{0}$ and $\textbf{v}_8$.
Let $\textbf{x}'''=\textbf{x}+\textbf{v}_4 = (-a-r,s-a,a+t,a+1-u)$.
If $r = 0$ and $s \geq 1$ and $t=0$, then $\textbf{x}'''$ lies between $\textbf{0}$ and $\textbf{v}_4$ unless $s=a+1$, in which case if $u=a$ then $\textbf{x}$ lies between $\textbf{0}$ and $\textbf{v}_2$, and if $u=a+1$ then $\textbf{x}'''$ lies between $\textbf{0}$ and $\textbf{v}_4$.
Let $\textbf{x}'''' = \textbf{x}-\textbf{v}_3=(a+1-r,a-1+s,t-a-1,a-u)$.
If $r \geq 1, s= 0$ and $t \geq 1$ then $\textbf{x}''''$ lies between $\textbf{0}$ and $-\textbf{v}_4$. If $r \geq 1, s= 0$ and $t = 0$ then $\textbf{x}''''$ lies between $\textbf{0}$ and $-\textbf{v}_3$ if $u=a$, and between $\textbf{0}$ and $\textbf{v}_6$ if $u=a+1$.
If $r \geq 1, s \geq 1$ and $t = 0$ then $\textbf{x}''$ lies between $\textbf{0}$ and $\textbf{v}_7$ unless $r=a+1$ or $s=a+1$.
If $r = a+1$, $s \geq 1$ and $t =0$ then $\textbf{x}'$ lies between $\textbf{0}$ and $-\textbf{v}_8$.
If $r \geq 1$, $s=a+1$ and $t =0$ then $\textbf{x}'$ lies between $\textbf{0}$ and $-\textbf{v}_4$.

This completes the cases for the orthant of $\textbf{v}_1$ for $k \equiv 0 \pmod 2$.

Now suppose that $k \equiv 1 \pmod 2$ and $\textbf{x}$ lies within the orthant of $\textbf{v}_1$, but not between $\textbf{0}$ and $\textbf{v}_1$. Then the first coordinate of $\textbf{x}$ is equal to $-a$ or $-a-1$, or the third coordinate equals $-a$ or $-a-1$, or the fourth equals $a+1$. We distinguish seven cases.

Case 1: $\textbf{x}=(-r,s,-t,a+1)$ where $a \leq r, t \leq a+1$ and $0 \leq s \leq a+1$.
Let $\textbf{x}' = \textbf{x} - \textbf{v}_1 = (a-1-r, s-a-1, a-1-t,1)$, which lies between $\textbf{0}$ and $\textbf{v}_8$ unless $s \leq 1$ in which case let $\textbf{x}'' = \textbf{x}'-\textbf{v}_8=(2a-2-r, s-2,2a-1-t,-a)$ which lies between $\textbf{0}$ and $-\textbf{v}_1$.

Case 2: $\textbf{x}=(-r,s,-t,u)$ where $a \leq r, t\leq a+1$ and $0 \leq s \leq a+1$ and $0 \leq u \leq a$.
Let $\textbf{x}' = \textbf{x} - \textbf{v}_1 = (a-1-r, s-a-1, a-1-t, u-a)$, which lies between $\textbf{0}$ and $\textbf{v}_5$ unless $s = 0$ or $u \leq 1$, in which case let $\textbf{x}'' = \textbf{x}'-\textbf{v}_5=(2a-1-r,s-1,2a-t,u-2)$. If $s = 0$ and $u \leq 1$ then $\textbf{x}''$ lies between $\textbf{0}$ and $-\textbf{v}_1$, unless $t=a$, in which case let $\textbf{x}'''=\textbf{x}''+\textbf{v}_1=(a-r,a,1,u+a-2)$ which lies between $\textbf{0}$ and $\textbf{v}_4$.
If $s = 0$ and $u \geq 2$ then $\textbf{x}''$ lies between $\textbf{0}$ and $-\textbf{v}_7$. If $s \geq 1$ and $u \leq 1$ then $\textbf{x}''$ lies between $\textbf{0}$ and $-\textbf{v}_8$ unless $s=a+1$, in which case let $\textbf{x}''''=\textbf{x}''+\textbf{v}_8=(a-r,1,a-t,a+u-1)$ which lies between $\textbf{0}$ and $\textbf{v}_1$.

Case 3: $\textbf{x}=(-r,s,-t,a+1)$ where $a\leq r\leq a+1$, $0\leq s \leq a+1$ and $0 \leq t \leq a-1$.
Let $\textbf{x}' = \textbf{x} - \textbf{v}_1 = (a-1-r, s-a-1, a-1-t,1)$, which lies between $\textbf{0}$ and $-\textbf{v}_6$ unless $s = 0$, in which case let $\textbf{x}'' = \textbf{x}'+\textbf{v}_6 = (2a-1-r,-1,-t,-a+1)$  which lies between $\textbf{0}$ and $-\textbf{v}_4$.

Case 4: $\textbf{x}=(-r,s,-t,a+1)$ where $0\leq r\leq a-1$, $0\leq s \leq a+1$ and $a \leq t \leq a+1$.
Let $\textbf{x}' = \textbf{x} - \textbf{v}_1 = (a-1-r, s-a-1, a-1-t,1)$, which lies between $\textbf{0}$ and $-\textbf{v}_3$ unless $s \leq 1$, in which case let $\textbf{x}'' = \textbf{x}'+\textbf{v}_3 = (-2-r,s-2,2a-2-t,-a+1)$  which lies between $\textbf{0}$ and $-\textbf{v}_2$.

Case 5: $\textbf{x}=(-r,s,-t,a+1)$ where $0\leq r,t\leq a-1$ and $0 \leq s \leq a+1$.
Let $\textbf{x}' = \textbf{x} - \textbf{v}_1 = (a-1-r, s-a-1, a-1-t,1)$, which lies between $\textbf{0}$ and $-\textbf{v}_7$ unless $s =0$, in which case let $\textbf{x}'' = \textbf{x}'+\textbf{v}_7 = (-r-1,-1,-t-1,-a+2)$  which lies between $\textbf{0}$ and $\textbf{v}_5$.

Case 6: $\textbf{x}=(-r,s,-t,u)$ where $0\leq r\leq a-1$, $0\leq s \leq a+1$, $a \leq t \leq a+1$ and $0 \leq u \leq a$.
Let $\textbf{x}' = \textbf{x} - \textbf{v}_1 = (a-1-r, s-a-1, a-1-t,u-a)$, which lies between $\textbf{0}$ and $-\textbf{v}_4$ unless $s = 0$ or $u=0$, in which case let $\textbf{x}'' = \textbf{x}'+\textbf{v}_4 = (-r-1,s-1,2a-1-t,u-1)$. If $s=0$ and $u=0$ then let $\textbf{x}''' = \textbf{x}''+\textbf{v}_2=(a-r,a,a+1-t,a-2)$ which lies between $\textbf{0}$ and $-\textbf{v}_5$.
If $s = 0$ and $u \geq 1$ then $\textbf{x}''$ lies between $\textbf{0}$ and $-\textbf{v}_6$.
If $s \geq 1$ and $u =0$ then $\textbf{x}''$ lies between $\textbf{0}$ and $\textbf{v}_3$ unless $s=a+1$, in which case $\textbf{x}'$ lies between $\textbf{0}$ and $\textbf{v}_6$.

Case 7: $\textbf{x}=(-r,s,-t,u)$ where $a \leq r\leq a+1$, $0\leq s \leq a+1$, $0 \leq t \leq a-1$ and $0 \leq u \leq a$.
Let $\textbf{x}' = \textbf{x} - \textbf{v}_1 = (a-1-r, s-a-1, a-1-t,u-a)$, which lies between $\textbf{0}$ and $-\textbf{v}_2$ unless $t = 0$ or $u=0$, in which case let $\textbf{x}'' = \textbf{x}'+\textbf{v}_2 = (2a-r,s,-t+1,u-1)$. If $t=0$ and $u=0$ then let $\textbf{x}''' = \textbf{x}''+\textbf{v}_8=(a+1-r,s-a+1,-a+1,a)$ which lies between $\textbf{0}$ and $-\textbf{v}_3$ unless $a \leq s \leq a+1$, in which case let $\textbf{x}'''' = \textbf{x}-\textbf{v}_7=(a-r,s-a,a,a-1)$ which lies between $\textbf{0}$ and $\textbf{v}_4$.
If $t = 0$ and $u \geq 1$ then $\textbf{x}''$ lies between $\textbf{0}$ and $-\textbf{v}_5$ unless $s=a+1$ or $u=a$ in which case let $\textbf{x}^v = \textbf{x}''+\textbf{v}_5=(a-r,s-a,-a,-a+u+1)$. 
If $s = a+1$ then $\textbf{x}'$ lies between $\textbf{0}$ and $\textbf{v}_3$. If $1 \leq s \leq a$ and $u=a$ then $\textbf{x}^v$ lies between $\textbf{0}$ and $\textbf{v}_8$.
If $s = 0$ and $u=a$ then $\textbf{x}''$ lies between $\textbf{0}$ and $-\textbf{v}_7$. If $t \geq 1$ and $u=0$ then $\textbf{x}''$ lies between $\textbf{0}$ and $\textbf{v}_6$ unless $s=a+1$, in which case  $\textbf{x}'$ lies between $\textbf{0}$ and $\textbf{v}_3$.

This completes the cases for the orthant of $\textbf{v}_1$ for $k \equiv 1 \pmod 2$. 
\end{proof}


\section {Conclusion}

This paper has reviewed the proven extremal circulant graphs of dimension 1 and 2, degree 2 to 5, and Dougherty and Faber's largest known solutions for dimension 3, degree 6 and 7, in the context of relevant lower and upper bounds. It has also presented newly discovered families of circulant graphs of dimension 4, degree 8 and 9, which are proven to be extremal for small diameters above a threshold and are conjectured to remain so for all diameters, along with a proof of the existence of the degree 8 graph for all diameters.

A question naturally arises whether there are any common relationships that are valid across the graphs of all four dimensions.
The formulae for the order of the extremal and largest known graphs of degree $d \leq 9$ are compared with the lower bounds $CJ(d,k)$. We recall that $CJ(d,k)=(1/2)(4/f)^fk^f+O(k^{f-1})$ for even $d$ and $f=d/2$, and observe that for degree $d=2,4,6$ and 8 the coefficients of the leading term $k^f$ in the formulae for graph order are $2,2,32/27$ and $1/2$ respectively, and therefore equal to that of $CJ(d,k)$ in each case, see Table \ref{table:7A}.

\begin{table}[!htbp]
\small
\caption{\small Coefficients of the two leading terms in the formulae for the order of extremal and largest known graphs of degree $d \leq 9$} 
\centering 
\begin{tabularx} {\linewidth} {c c l  * {3} {@ { } c}} 
\noalign {\vskip 2mm} 
\hline\hline 
\noalign {\vskip 1mm} 
Degree & Dimension & order & Coefficient & Coefficient & Coefficient \\
$d$ & $f$ & $ CC/DF/L(d,k)$ & of $k^f$ & of $k^{f-1}$ & of $k^f$ in $CJ(d,k)$ \\ 
\hline 
\noalign {\vskip 1mm} 
2 & 1 & $2k+1$ & 2 & 1 & 2 \\ 
3 & 1 & $4k$ & 4 & 0 & - \\
4 & 2 & $2k^2+2k+1$ & 2 & 2 & 2 \\
5 & 2 & $4k^2$ & 4 & 0 & - \\
6 & 3 & $(32k^3+48k^2)/27+O(k)$ & $32/27$ & $48/27$ & $32/27$ \\
7 & 3 & $64k^3/27+O(k)$ & $64/27$ & 0 & - \\
8 & 4 & $(k^4+2k^3)/2+O(k^2)$ & $1/2$ & 1 & $1/2$ \\
9 & 4 & $k^4+O(k^2)$ & 1 & 0 & - \\ [1ex] 
\hline 
\end{tabularx}
\label{table:7A} 
\end{table}

This supports a conjecture that for any even degree $d$, the leading term in the formula for the order of an extremal graph is $(1/2)(4/f)^fk^f$ where $f=d/2$. We may also observe from Table \ref {table:7A} that the second term in all four cases is equal to $(4/f)^{f-1}k^{f-1}$. For odd degree $d$ we see that the leading coefficient is double the coefficient for the even degree of the same dimension in all four cases. Also the coefficient of $k^{f-1}$ in each case is zero. This is summarised in Table \ref{table:7B}.

\begin{table}[!htbp]
\small
\caption{\small Expressions for the first two terms in the formulae for the order of largest known graphs of degree $d \leq 9$} 
\centering 
\begin{tabularx} {\linewidth} {L L L L} 
\noalign {\vskip 2mm} 
\hline\hline 
\noalign {\vskip 1mm} 
Degree $d$ & Dimension $f$  & Leading Term & Second Term \\
\hline 
\noalign {\vskip 2mm}  
$d=2,4,6$ or 8 & $f=d/2$ & $ \frac {1}{2} \left ( \frac {4}{f} \right )^f k^f$ & $ \left ( \frac {4}{f} \right )^{f-1} k^{f-1}$ \\ 
\noalign {\vskip 2mm}  
$d=3,5,7$ or 9 & $f=(d-1)/2$ & $ \left ( \frac {4}{f} \right )^f k^f$ & 0 \\  [1ex] 
\noalign {\vskip 1mm}  
\hline 
\end{tabularx}
\label{table:7B} 
\end{table}

If these terms were to remain valid for degree $d \geq 10$ then for dimension 5 and 6 this would give the following formulae for extremal circulant graph order:
\[ n(d,k) =
\begin {cases}
(512k^5+1280k^4)/3125 + O(k^3) & \mbox { for } d=10 \\
1024k^5/3125 + O(k^3) & \mbox { for } d=11 \\
(32k^6+96k^5)/729 + O(k^4) & \mbox { for } d=12 \\
64k^6/729 + O(k^4) & \mbox { for } d=13 \\
\end {cases}
\]
It is hoped that the results in this paper will provide a useful framework for further research within this area to identify new families of circulant graphs, for degree $d\geq 10$ and arbitrary diameter $k$, that can be proven extremal for some range of $k$.

%

\section {Acknowledgements}

I would like to thank Professor Jozef Siran for reviewing the draft and making many helpful comments to improve the presentation.


\addcontentsline{toc}{chapter}{Bibliography}

\end{document}